\documentclass[smallextended]{svjour3}

\usepackage[english]{babel}
\usepackage[T1]{fontenc}
\usepackage[utf8]{inputenc}
\usepackage{amssymb}
\usepackage{amsmath}
\usepackage{amsfonts}
\usepackage{amscd}
\usepackage{color} 
\usepackage{hyperref}
\usepackage[active]{srcltx}

\hypersetup{
  pdftitle={Reverse Khas'minskii Condition},
  pdfauthor={Alberto Giulio Setti, Daniele Valtorta},
  pdfsubject={Potenziali di Evans su varietà paraboliche},
  pdfkeywords={Setti 7i evans potential potentials parabolic manifold tesi matematica}
  pdfpagelayout=SinglePage,
  pdfpagemode=UseOutlines}



\newcommand{\R}{\mathbb{R}}

\newcommand{\D}{\mathcal{D}}

\newcommand{\norm}[1]{\left\|#1\right\|}

\newcommand{\ps}[2]{\left\langle#1\middle\vert#2\right\rangle}

\newcommand{\ton}[1]{\left(#1\right)}
\newcommand{\abs}[1]{\left|#1\right|}


\newcommand{\ka}{\mathcal{K}}
\newcommand{\E}{\mathcal{E}}


\newcommand{\cp}{\operatorname{Cap_p}}
\newcommand{\Ka}{Khas'minskii }

\newtheorem{prop}{Proposition}[section]
\newtheorem{teo}[prop]{Theorem}
\newtheorem{deph}[prop]{Definition}

\newtheorem{rem}[prop]{Remark}



\begin{document}

\title{Reverse Khas'minskii condition
\thanks{Special thanks go to my advisor, prof. Alberto Giulio Setti, whose assistence has proven invaluable in writing this paper}}

\date{\today}

\author{Daniele Valtorta}
\institute{D. Valtorta \at \\ Dipartimento di Matematica\\
Universit\`a degli studi di Milano\\
via Saldini 50\\
20133 Milano, Italy, EU\\
tel. +393492972974\\
\email{danielevaltorta@gmail.com}
}

\begin{abstract}
The aim of this paper is to present and discuss some equivalent characterizations of $p$-parabolicity for complete Riemannian manifolds in terms of existence of special exhaustion functions. In particular, \Ka in \textit{Ergodic properties of recurrent diffusion prossesses and stabilization of solution to the Cauchy problem for parabolic equations} (Theor. Prob. Appl., 5 no.2, 1960) proved that if there exists a $2$-superharmonic function $\ka$ defined outside a compact set on a complete Riemannian manifold $R$ such that $\lim_{x\to \infty} \ka(x)=\infty$, then $R$ is $2$-parabolic, and Sario and Nakai in \textit{Classification theory of Riemann surfaces} (1970) were able to improve this result by showing that $R$ is $2$-parabolic if and only if there exists an Evans potential, i.e. a $2$-harmonic function $E:R\setminus K \to \R^+$ with $\lim_{x\to \infty} \E(x)=\infty$. In this paper, we will prove a reverse \Ka condition valid for any $p>1$ and discuss the existence of Evans potentials in the nonlinear case.

\subclass{53C20, 31C12}

\keywords{p-parabolicity, superharmonic functions, Khas'minskii condition, Evans potentials}

\end{abstract}

\maketitle

Given a complete Riemannian manifold $R$, we say that $R$ is $p$-parabolic if every compact subset $K\subset R$ has $p$-capacity zero, or equivalently if every bounded below $p$-subharmonic function is constant. In the following we briefly recall some definitions and results relative to $p$-capacity and $p$-harmonic functions. Some good references for this introductory part are \cite{3} and \cite{20} (for the $p=2$ case only). Note that \cite{3} works on $\R^n$, but from the proofs it is quite clear that all the local results extend also to generic Riemannian manifolds.

This paper is dedicated to characterize the $p$-parabolicity of a complete Riemannian manifold through the so-called \Ka condition, answering in the affermative to a problem raised, e.g., in \cite{99} pag 820.

In particular we will prove that a complete Riemannian manifold $R$ is $p$-parabolic if and only if for any $p$-regular compact set (for example for many compact set with smooth boundary) there exists a $p$-superharmonic function $f:R\setminus K\to \R^+$ with $f|_{\partial K}=0$ and $\lim_{x\to \infty} f(x)=\infty$. The \Ka condition is discussed in \cite{20} and in \cite{99}; the latter article provides also some other equivalent characterizzations of $p$-parabolicity and applications of the \Ka condition.

In the following $\D_p(f)$ will denote its $p$-Dirichlet integral, i.e.
\begin{gather*}
\D_p(f)\equiv \int_{\Omega} \abs{\nabla f}^p dV
\end{gather*}
where $\Omega$ is the domain of the function $f$. $W^{1,p}(\Omega)$ stands for the standard Sobolev space, while $L^{1,p}(\Omega)$ is the so-called Dirichlet space, i.e. the space of functions in $W^{1,p}_{loc}(\Omega)$ with finite $p$-Dirichlet integral. Hereafter, we assume that $R$ is a complete smooth noncompact Riemannian manifold without boundary with metric tensor $g_{ij}$ and volume form $dV$.

\begin{deph}
 Given a compact set and an open set $K\subset\Omega\subset R$, we define
\begin{gather*}
 \operatorname{Cap_p}(K,\Omega)\equiv \inf_{\varphi \in C^\infty_c(\Omega), \ \varphi(K)=1 }\int_\Omega \abs{\nabla \varphi}^p dV
\end{gather*}
If $\Omega=R$, then we set $\cp(K,R)\equiv \cp(K)$.\\
By a standard density argument for Sobolev spaces, the definition is unchanged if we allow $\varphi -\psi\in W^{1,p}_0(\Omega\setminus K)$, where $\psi$ is a cutoff function with support in $\Omega$ and equal to $1$ on $K$.
\end{deph}
By definition, $R$ is $p$-parabolic if and only if $\cp(K)=0$ for every $K\in R$, or equivalently if there exists a compact set with nonempty interior $\tilde K$ with $\cp(\tilde K)=0$.
\begin{deph}
 A real function $h$ defined on an open $\Omega\subset R$ is said to be $p$-harmonic if $h\in W^{1,p}_{loc}(\Omega)$ and $\Delta_p h = 0$ in the weak sense, i.e.
\begin{gather*}
 \int_\Omega \abs{\nabla h}^{p-2}\ps{\nabla h }{\nabla \phi }dV=0 \ \ \ \forall \ \phi \in C^\infty_c(\Omega)
\end{gather*}
The space of $p$-harmonic functions on an open set $\Omega$ is denoted by $H_p(\Omega)$.
\end{deph}
We recall that $p$-harmonic functions are always continuous (in fact, they are $C^{1,\alpha}(\Omega)$) and they are also minimizers of the $p$-Dirichlet integral
\begin{deph}
A function $s\in W^{1,p}_{loc}(\Omega)$ is a $p$-supersolution if $\Delta_p h \leq 0$ in the weak sense, i.e.
\begin{gather*}
 \int_{\Omega} \abs{\nabla s}^{p-2}\ps{\nabla s}{\nabla \phi }dV\geq 0 \ \ \ \forall \ \phi \in C^\infty_c(\Omega), \phi \geq 0
\end{gather*}
A function $s:\Omega\to \R\cup\{+\infty\}$ (not everywhere infinite) is said to be $p$-superharmonic if it is lower semicontinuous and for every open $D\Subset \Omega$ and every $p$-harmonic function on $D$ with $h|_{\partial D}\leq s|_{\partial D}$, then $h\leq s$ on all $D$. The space of $p$-superharmonic functions is denoted by $S_p(\Omega)$.
\end{deph}
We recall that all $p$-supersolutions have a lower-semicontinuous rappresentative in $W^{1,p}_{loc}(\Omega)$ and $s\in W^{1,p}_{loc}(\Omega)$ is $p$-superharmonic if and only if it is a $p$-supersolution. In particular thanks to Caccioppoli-type estimates all bounded above $p$-superharmonic functions are $p$-supersolutions. The family of $p$-superharmonic functions is closed under right-directed convergence, i.e. if $s_n$ is an increasing sequence of $p-$superharmonic functions with pointwise limit $s$, then either $s=\infty$ everywhere or $s$ is $p$-superharmonic. By a truncation argument, this also shows that every $p$-superharmonic function is the limit of an increasing sequence of $p$-supersolutions.

Now we turn our attention to special $p$-harmonic functions, the so-called $p$-potentials.
\begin{prop}
 Given $K\subset \Omega\subset R$ with $\Omega$ bounded and $K$ compact, and given $\psi\in C^{\infty}_c(\Omega)$ s.t. $\psi\vert_K=1$, there exists a unique function:
\begin{gather*}
 h\in W^{1,p}(\Omega\setminus K) \ \ \ h-\psi\in W^{1,p}_0(\Omega\setminus K)
\end{gather*}
This function is a minimizer for the $p$-capacity, explicitly:
\begin{gather*}
 \cp(K,\Omega)=\int_\Omega \abs{\nabla h}^p dV
\end{gather*}
for this reason, we call $h$ the $p$-potential of the couple $(K,\Omega)$. Note that if $\Omega$ is not bounded, it is still possible to define its $p$-potential by a standard exhaustion argument. 
\end{prop}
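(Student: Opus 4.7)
The plan is to define $h$ as the unique minimizer of $\D_p$ over the closed affine convex set $\A = \psi + W^{1,p}_0(\Omega\setminus K)$ and then to verify that it realizes the capacity. First I would establish existence via the direct method of the calculus of variations. Since $\Omega\setminus K$ is bounded, the Poincaré inequality on $W^{1,p}_0(\Omega\setminus K)$ gives coercivity, so any $\D_p$-bounded minimizing sequence $\{h_n\}\subset\A$ is bounded in $W^{1,p}(\Omega\setminus K)$. Reflexivity of $W^{1,p}$ for $p>1$ extracts a weakly convergent subsequence $h_n\rightharpoonup h$; weak closure of $\A$ keeps $h\in\A$, and convexity plus continuity of $\D_p$ yield weak lower semicontinuity, so $h$ is a minimizer.

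For uniqueness, the integrand $\xi\mapsto|\xi|^p$ is strictly convex when $p>1$, hence if $h_1,h_2\in\A$ both minimize, then $(h_1+h_2)/2\in\A$ and $\D_p\bigl((h_1+h_2)/2\bigr)\le\tfrac12\bigl(\D_p(h_1)+\D_p(h_2)\bigr)$, with equality forcing $\nabla h_1=\nabla h_2$ a.e.; applying the Poincaré inequality to $h_1-h_2\in W^{1,p}_0(\Omega\setminus K)$ then gives $h_1=h_2$. The associated Euler--Lagrange equation is exactly the weak formulation $\int|\nabla h|^{p-2}\ps{\nabla h}{\nabla\phi}\,dV=0$ for every $\phi\in C^\infty_c(\Omega\setminus K)$, so $h$ is automatically $p$-harmonic on $\Omega\setminus K$.

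To identify $\D_p(h)$ with $\cp(K,\Omega)$, note that any admissible $\varphi\in C^\infty_c(\Omega)$ with $\varphi|_K=1$ has $\varphi-\psi\in W^{1,p}_0(\Omega\setminus K)$ and $\nabla\varphi\equiv 0$ a.e.\ on $K$, so $\int_\Omega|\nabla\varphi|^p\,dV\ge\D_p(h)$. The reverse inequality comes from approximating $h-\psi$ in $W^{1,p}_0(\Omega\setminus K)$ by $\phi_n\in C^\infty_c(\Omega\setminus K)$; since each $\phi_n$ has support compactly contained in the open set $\Omega\setminus K$, it vanishes in a neighborhood of $K$, so $\psi+\phi_n\in C^\infty_c(\Omega)$ still equals $1$ on $K$ and is admissible, giving $\int|\nabla(\psi+\phi_n)|^p\,dV\to\D_p(h)\ge\cp(K,\Omega)$. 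For the unbounded $\Omega$ case indicated in the statement, exhaust by bounded $\Omega_n\Subset\Omega_{n+1}$ with $K\Subset\Omega_1$; the $p$-potentials $h_n$ of $(K,\Omega_n)$ form a monotone sequence by the comparison principle (itself a consequence of the strict convexity above) bounded in $[0,1]$, and one extracts the limit $h$.

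The main obstacle I anticipate is the capacity identification, specifically confirming that the smooth approximants $\psi+\phi_n$ genuinely land in the admissible class $\{\varphi\in C^\infty_c(\Omega):\varphi|_K=1\}$ and that the $\D_p$-energies converge; this rests on the support-away-from-$K$ property of functions in $C^\infty_c(\Omega\setminus K)$ together with the cut-off role of $\psi$. The remainder is the standard machinery of convex variational problems in $W^{1,p}$.
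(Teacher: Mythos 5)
Your argument is correct and is essentially the standard direct-method proof that the paper itself relies on (the proposition is stated without proof, as a known fact from nonlinear potential theory): existence and uniqueness of the minimizer over the affine class $\psi+W^{1,p}_0(\Omega\setminus K)$ via coercivity, reflexivity and strict convexity of $\abs{\xi}^p$, followed by identification of the minimum with $\cp(K,\Omega)$ through the density of $C^\infty_c(\Omega\setminus K)$ in $W^{1,p}_0(\Omega\setminus K)$. The only step worth making explicit is that $\varphi-\psi\in W^{1,p}_0(\Omega\setminus K)$ for every admissible smooth $\varphi$ (which uses the standard fact that a $W^{1,p}$ function, continuous up to the boundary of a bounded open set and vanishing there pointwise, lies in $W^{1,p}_0$), but this is precisely the density remark the paper already builds into its definition of capacity.
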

One might ask when the $p$-potential of a couple of sets is continuous on $\overline \Omega$. In this case the set $\Omega\setminus K$ is said to be regular with respect to the $p$-laplacian, or simply $p$-regular. $p$-regularity depends strongly on the geometry of $\Omega$ and $K$, and there exist at least two characterization of this property: the Wiener criterion and the barrier condition. For the aim of this paper we simply note that $p$-regularity is a local property and that if $\Omega\setminus K$ has smooth boundary, then it is $p$-regular. As references for the Wiener criterion and the barrier condition, we cite \cite{3} and \cite{6} (which deal only with $\R^n$, but as observed before local properties of $\R^n$ are easily extended to Riemannian manifolds) and \cite{22}, a very recent article which deals with $p$-harmonicity and $p$-regularity on metric spaces.

Before proceding, we cite some elementary estimates on the capacity.
\begin{lemma}\label{lemma_cap}
Let $K_1\subset K_2\subset \Omega_1\subset \Omega_2\subset R$. Then:
\begin{gather*}
 \cp(K_2,\Omega_1)\geq \cp(K_1,\Omega_1) \ \ \ \cp(K_2,\Omega_1)\geq \cp(K_2,\Omega_2)
\end{gather*}
Moreover, if $h$ is the $p$-potential of the couple $(K,\Omega)$, for $0\leq t<s\leq 1$ we have:
\begin{gather*}
 \cp\ton{\{h\leq s\},\{h<t\}}=\frac{\cp(K,\Omega)}{(s-t)^{p-1}}
\end{gather*}

\end{lemma}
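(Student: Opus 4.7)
\textit{Plan.} The two monotonicity estimates follow immediately from the variational definition of $\cp$. For the first, any $\varphi\in C^\infty_c(\Omega_1)$ with $\varphi\equiv 1$ on $K_2$ automatically satisfies $\varphi\equiv 1$ on $K_1\subset K_2$, hence is admissible for $(K_1,\Omega_1)$; passing to the infimum yields $\cp(K_2,\Omega_1)\geq \cp(K_1,\Omega_1)$. For the second, extending any admissible test function by zero from $\Omega_1$ to $\Omega_2$ does not alter the Dirichlet integral and produces an admissible competitor for $(K_2,\Omega_2)$.

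For the level-set identity, which given the normalization $h|_{\partial K}=1$, $h|_{\partial\Omega}=0$ I read as $\cp(\{h\geq s\},\{h>t\})=\cp(K,\Omega)/(s-t)^{p-1}$, I would argue in two independent steps. Set $A=\{h\geq s\}$ and $B=\{h>t\}$, so that $K\subset A\subset B\subset\Omega$. On the annulus $\{t<h<s\}=B\setminus A$ the rescaling $(h-t)/(s-t)$ is $p$-harmonic, being an affine function of the $p$-harmonic $h$, and has Sobolev trace $1$ on $\{h=s\}$ and $0$ on $\{h=t\}$; extending it by $1$ on $A$ and by $0$ outside $B$ gives, by the uniqueness part of the preceding proposition, the $p$-potential of $(A,B)$. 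Hence
\[
\cp(A,B)=\int_{\{t<h<s\}}\abs{\nabla\tfrac{h-t}{s-t}}^p\,dV=\frac{1}{(s-t)^p}\int_{\{t<h<s\}}\abs{\nabla h}^p\,dV.
\]

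The second step is to compute the annular Dirichlet integral by testing the weak equation $\int_{\Omega\setminus K}\abs{\nabla h}^{p-2}\ps{\nabla h}{\nabla\phi}\,dV=0$ against
\[
\phi=\min\bigl(\max(h-t,0),\,s-t\bigr)-(s-t)\,h.
\]
Using $0\leq t<s\leq 1$ together with the Sobolev traces of $h$ on $\partial K$ and $\partial\Omega$, one checks that $\phi\in W^{1,p}_0(\Omega\setminus K)$, and its gradient equals $\bigl(\chi_{\{t<h<s\}}-(s-t)\bigr)\nabla h$. Substituting and rearranging gives $\int_{\{t<h<s\}}\abs{\nabla h}^p\,dV=(s-t)\,\cp(K,\Omega)$; combining with Step~1 yields the claimed identity.

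The only genuinely delicate point is Step~1, namely verifying that the truncated affine rescaling actually realises the $p$-potential of $(A,B)$ rather than merely serves as an admissible competitor, since the level sets $\{h=s\}$ and $\{h=t\}$ need not be regular. This is handled entirely at the level of Sobolev traces plus the uniqueness of the minimiser from the preceding proposition, bypassing pointwise boundary values of $A$ and $B$. Step~2 is then an essentially algebraic identity, the test function $\phi$ being engineered so that $\nabla\phi$ probes the annulus $\{t<h<s\}$ with the correct weight.
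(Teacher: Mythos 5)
The paper offers no proof of its own here --- it simply cites Holopainen \cite{77} and \cite{3} --- and your argument is correct and is essentially the standard one found in those references. Your reinterpretation of the level-set identity as $\cp(\{h\geq s\},\{h>t\})$ is the right reading (with the potential normalized to $1$ on $K$ and $0$ on $\partial\Omega$, the sets as printed are nested the wrong way round), and both your gluing-plus-uniqueness step and the test function $\phi=\min(\max(h-t,0),s-t)-(s-t)h$ do exactly what you claim.
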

\begin{proof}
The proofs of these estimates follow quite easily from the definitions, and they can be found in propositions 3.6, 3.7, 3.8 in \cite{77}, or in section 2 of \cite{3}. Even though the setting of \cite{3} is $\R^n$, all the argumets used apply also to the Riemannian case.
\end{proof}

In the following section we cite some technical results that will be essential in our proof of the reverse \Ka condition, in particular the solvability of the obstacle problem and the minimizing property of its solutions and a technical lemma about uniformly convex Banach spaces. Section \ref{sec_ka} contains the main results of this article, the proof of the reverse \Ka condition. We tried to use as few technical tools as possible in our proof, so as to make it is readable and understandable by non-specialists.

\section{Obstacle problem}\label{sec_obs}
In this section we present the so-called obstacle problem, a technical tool that will be foundamental in our main theorem.
\begin{deph}
 Let $R$ be a Riemannian manifold and $\Omega\subset R$ be a bounded domain. Given $\theta\in W^{1,p}(\Omega)$ and $\psi:\Omega\to [-\infty,\infty]$, we define the set:
\begin{gather*}
 K_{\theta,\psi}=\{\varphi\in W^{1,p}(\Omega) \ s.t. \ \varphi\geq \psi \ a.e. \ \ \ \varphi-\theta\in W^{1,p}_0(\Omega)\}
\end{gather*}
we say that $s\in K_{\theta,\psi}$ solves the obstacle problem relative to the $p$-laplacian if for any $\varphi\in K_{\theta,\psi}$:
\begin{gather*}
 \int_\Omega \ps{\abs {\nabla s }^{p-2}\nabla s}{\nabla \varphi - \nabla s}dV \geq 0
\end{gather*}
\end{deph}

It is evident that the function $\theta$ defines in the Sobolev sense the boundary values of the solution $s$, while $\psi$ plays the role of obstacle, i.e. $s$ must be $\geq \psi$ at least almost everywhere. Note that if we set $\psi\equiv -\infty$, the obstacle problem turns into the classical Dirichlet problem. Anyway for our purposes the two functions $\theta$ and $\psi$ will always coincide, and in what follows for simplicity we will write $K_{\psi,\psi}\equiv K_{\psi}$.\\
The obstacle problem is a very important tool in nonlinear potential theory, and with the development of calculus on metric spaces it has been studied also in this very general setting. In the following we cite some results relative to this problem and its solvability.
\begin{prop}\label{prop_obs}
 If $\Omega$ is a bounded domain in a Riemannian manifold $R$, the obstacle problem $K_{\theta,\psi}$ has always a unique (up to a.e. equivalence) solution if $K_{\theta,\psi}$ is not empty (which is always the case if $\theta=\psi$). Moreover the lower semicontinuous regularization of $s$ coincides a.e. with $s$ and it is the smallest $p$-superharmonic function in $K_{\theta,\psi}$, and also the function in $K_{\theta,\psi}$ with smallest $p$-Dirichlet integral. If the obstacle $\psi$ is continuous in $\Omega$, then $s\in C(\Omega)$.
\end{prop}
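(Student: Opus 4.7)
The approach is variational: I would obtain the solution $s$ as the unique minimizer of the functional $F(\varphi)\equiv \int_\Omega \abs{\nabla \varphi}^p\, dV$ on $K_{\theta,\psi}$. First, $K_{\theta,\psi}$ is nonempty when $\theta=\psi$ (since $\psi$ itself lies in $K_{\psi,\psi}$) and is a closed convex subset of $W^{1,p}(\Omega)$: closedness under weak convergence uses that $\varphi\geq \psi$ a.e.\ passes to the limit along a suitable a.e.\ convergent subsequence, together with the weak closedness of $W^{1,p}_0(\Omega)$. The functional $F$ is convex and continuous on $W^{1,p}$, hence weakly lower semicontinuous, and is coercive on the affine slice $\theta+W^{1,p}_0(\Omega)$ by the Poincar\'e inequality applied to $\varphi-\theta$. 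The direct method then produces a minimizer, unique up to a.e.\ equivalence by strict convexity of $F$ on the slice (differences of competitors lie in $W^{1,p}_0(\Omega)$ on the bounded $\Omega$ and so cannot be nonzero constants).

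I would then derive the variational inequality by a first-variation argument: for $\varphi\in K_{\theta,\psi}$ and $t\in(0,1)$, convexity of $K_{\theta,\psi}$ puts $s+t(\varphi-s)$ in $K_{\theta,\psi}$, so $F(s+t(\varphi-s))\geq F(s)$; differentiating in $t$ at $t=0^+$ (justified by dominated convergence on the pointwise derivative of the integrand) yields
\begin{gather*}
\int_\Omega \ps{\abs{\nabla s}^{p-2}\nabla s}{\nabla \varphi-\nabla s}\, dV\geq 0.
\end{gather*}
Conversely, strict convexity of $F$ implies that any $s\in K_{\theta,\psi}$ satisfying this inequality is automatically the minimizer, so the two characterizations coincide.

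For the lsc regularization, testing the variational inequality against $\varphi=s+\varepsilon\phi$ with $\phi\in C^\infty_c(\Omega)$, $\phi\geq 0$ (which keeps $\varphi$ in $K_{\theta,\psi}$) shows that $s$ is a $p$-supersolution on $\Omega$. Hence, by the representative theorem cited just before the statement, $s$ admits a lower semicontinuous representative $\tilde s$ with $\tilde s=s$ a.e., and $\tilde s$ is $p$-superharmonic. To see that $\tilde s$ is minimal among $p$-superharmonic members of $K_{\theta,\psi}$, take such a $u$ and consider the open set $A=\{u<\tilde s\}$. On $A$ the obstacle is not binding, since $\tilde s>u\geq \psi$, so $\tilde s$ is $p$-harmonic on $A$; $u$ is $p$-superharmonic on $A$ with $\tilde s\leq u$ on $\partial A$ in the Sobolev trace sense. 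The comparison principle then forces $\tilde s\leq u$ on $A$, contradicting the definition of $A$ unless $A$ has measure zero.

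Finally, the continuity of $\tilde s$ when $\psi\in C(\Omega)$ is the main technical obstacle. Away from the contact set $\{\tilde s=\psi\}$, the function $\tilde s$ is $p$-harmonic and thus $C^{1,\alpha}$ by the classical regularity theory for the $p$-Laplacian, so the issue concentrates at free-boundary points, where one combines lower semicontinuity of $\tilde s$, the sandwich $\psi\leq \tilde s$, and continuity of $\psi$ to pinch $\tilde s$ continuously onto $\psi$. This free-boundary argument is classical but delicate, and I would defer to the standard references in nonlinear potential theory rather than reproduce it here.
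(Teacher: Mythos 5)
The paper gives no proof of this proposition at all: it defers entirely to Heinonen--Kilpel\"{a}inen--Martio \cite{3} and Kinnunen--Martio \cite{21}. Your variational outline (direct method on the closed convex set $K_{\theta,\psi}$, Poincar\'e for coercivity on the affine slice, first variation to obtain the variational inequality and convexity for the converse, strict convexity plus connectedness of $\Omega$ for uniqueness, testing with $s+\varepsilon\phi$ to get the supersolution property) is a faithful reconstruction of the standard route those references take, and those parts are sound.

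The one step that does not work as written is the minimality of $\tilde s$ among $p$-superharmonic members of $K_{\theta,\psi}$. You set $A=\{u<\tilde s\}$ and treat it as an open set, but both $u$ and $\tilde s$ are merely lower semicontinuous, and the set where one lsc function is strictly below another need not be open (it would be if $u$ were \emph{upper} semicontinuous). Moreover, the claim that $\tilde s$ is $p$-harmonic on $A$ because ``the obstacle is not binding'' is itself a nontrivial regularity statement: it is the standard fact that the solution is $p$-harmonic on $\{s>\psi\}$, which is proved for continuous $\psi$, not for the general measurable obstacle of the statement; and the boundary comparison on the rough set $A$ would need care. The standard argument avoids $A$ entirely: a $p$-superharmonic $u\in K_{\theta,\psi}$ lies in $W^{1,p}(\Omega)$ and is therefore a $p$-supersolution (as recalled in the paper's preliminaries); note that $\min(s,u)\in K_{\theta,\psi}$ and test the variational inequality for $s$ with $\varphi=\min(s,u)$, test the supersolution inequality for $u$ with the nonnegative function $(s-u)^+\in W^{1,p}_0(\Omega)$, and add the two; strict monotonicity of $\xi\mapsto\abs{\xi}^{p-2}\xi$ then forces $\nabla (s-u)^+=0$ a.e., hence $(s-u)^+=0$ and $s\leq u$ a.e., and everywhere after lsc regularization. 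Deferring the continuity statement for continuous $\psi$ to the references is consistent with what the paper itself does for the entire proposition.
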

In \cite{3}, Heinonen, Kilpel\"{a}inen and Martio prove this theorem in the setting of a Euclidean measure space with a doubling property and a Poincaré inequality, but it is quite clear that the techniques involved also apply to the setting of any Riemannian manifold. As mentioned before, this problem has been extensively studied also on measure metric spaces with a doubling property and a Poincaré inequality (for example bounded domains in Riemannian manifolds with respect to the measure induced by the metric), and proposition \ref{prop_obs} holds even in this more general setting (see \cite{21}).

We make a remark on the minimizing property of the $p$-superharmonic function, which will play a central role in our proof.
\begin{rem}\label{rem_min}
 Let $s$ be a $p$-superharmonic function in $W^{1,p}(\Omega)$ where $\Omega\subset R$ is a bounded domain. Then for any function $f\in W^{1,p}(\Omega)$ with $f\geq s$ a.e. and $f-s\in W^{1,p}_0(\Omega)$ we have:
\begin{gather*}
 \D_p(s)\leq \D_p(f)
\end{gather*}
\end{rem}
\begin{proof}
 This remark follows easily form the minimizing property of the solution to the obstacle problem. In fact, the previous proposition shows that $s$ is the solution to the obstacle problem relative to $K_s$, and the minimizing property follows.
\end{proof}

When it comes to the obstacle problem (or similarly to the Dirichlet problem), the regularity of the solution on $\partial \Omega$ is always a good question. An easy corollary to theorem 7.2 in \cite{22} is the following:
\begin{prop}\label{prop_cont}
 Given a bounded $\Omega\subset R$ with smooth boundary and given a $\psi\in W^{1,p}(\overline \Omega)\cap C(\overline \Omega)$, then the unique solution to the obstacle problem $K_\psi$ is continuous up to $\partial \Omega$.
\end{prop}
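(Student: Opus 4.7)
The plan is to reduce everything to the boundary regularity criterion provided by Theorem 7.2 of \cite{22}. By Proposition \ref{prop_obs}, the (lower semicontinuous representative of the) solution $s$ is already continuous in the interior of $\Omega$, because the obstacle $\psi$ is continuous there. Thus the only task is to show that $\lim_{x \to x_0} s(x) = \psi(x_0)$ for every $x_0 \in \partial\Omega$, i.e.\ that every boundary point is regular for the obstacle problem $K_\psi$.

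Theorem 7.2 in \cite{22} is phrased in the generality of metric measure spaces satisfying a doubling condition and a $(1,p)$-Poincaré inequality, hypotheses which are automatic on any bounded domain in a smooth Riemannian manifold. In that setting the theorem asserts that if $\psi \in W^{1,p}(\overline\Omega)\cap C(\overline\Omega)$ and each $x_0\in\partial\Omega$ satisfies a Wiener-type lower capacity density condition for $R\setminus\Omega$, then the solution of $K_\psi$ lies in $C(\overline\Omega)$ and agrees with $\psi$ on $\partial\Omega$. So the proof splits into two pieces: (i) translate the metric-space statement into the Riemannian language, and (ii) verify the capacity density condition under the smooth-boundary hypothesis.

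For (i), nothing is needed beyond remarking that bounded Riemannian domains carry a doubling measure and satisfy the standard Poincaré inequality, so the hypotheses of Theorem 7.2 in \cite{22} are met verbatim. For (ii), at any $x_0 \in \partial\Omega$ I would choose normal coordinates centered at $x_0$ in which $\partial\Omega$ is the graph of a smooth function. In such coordinates the complement $R\setminus\Omega$ contains a Euclidean half-ball of uniform radius, hence also a full metric ball (of smaller radius) entirely inside $R\setminus\Omega$. This is the classical exterior cone/ball condition, and it trivially implies the required uniform lower bound on the $p$-capacity density of $R\setminus\Omega$ in balls centered at $x_0$ (see, e.g., the corresponding standard computation in \cite{3}). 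Consequently every $x_0\in\partial\Omega$ is a regular boundary point and the conclusion follows.

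The main (and really only) obstacle is bookkeeping: carefully matching the definitions and hypotheses used in \cite{22} with those formulated above for Riemannian manifolds, and checking that the smooth-boundary assumption indeed produces the precise form of measure/capacity density that the cited theorem requires. Once this identification is made, the proposition is immediate.
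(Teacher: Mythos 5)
Your proposal follows essentially the same route as the paper, which simply derives the proposition as a corollary of Theorem 7.2 in \cite{22} and notes (in the subsequent remark) that smooth boundary is only used to guarantee the Wiener-type regularity of the boundary points. Your additional verification of the doubling/Poincar\'e hypotheses and of the exterior-ball capacity density condition is correct and just makes explicit the details the paper leaves implicit.
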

Note that it is not necessary to assume $\partial \Omega$ smooth, it suffices to assume $\partial \Omega$ regular with respect to the $p$-Dirichlet problem, or equivalently that satifies the Wiener criterion in each point, but we think that for the aim of this paper it is not necessary to go into such interesting but quite technical details.

In the following we will need this lemma about uniform convexity in Banach spaces. This lemma doesn't seem very intuitive at first glance, but a very simple two dimensional drawing of the vectors involved shows that in fact it is quite natural.
\begin{lemma}\label{lemma_*}
 Given a uniformly convex Banach space $E$, there exists a function $\sigma:[0,\infty)\to [0,\infty)$ strictly positive on $(0,\infty)$ with $\lim_{x\to 0} \sigma(x)=0$ such that for any $v,w\in E$ with $\norm{v+1/2 w}\geq \norm v$:
\begin{gather*}
 \norm{v+w}\geq \norm v \ton{1+\sigma\ton{\frac{\norm w}{\norm v +\norm w}}}
\end{gather*}
\end{lemma}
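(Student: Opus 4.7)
The plan is to reduce the estimate to the standard modulus of convexity of $E$,
\[
\delta_E(\epsilon) := \inf\Big\{ 1 - \norm{\tfrac{x+y}{2}} \, : \, \norm{x}, \norm{y} \leq 1,\ \norm{x-y} \geq \epsilon \Big\},
\]
which by uniform convexity is nondecreasing, strictly positive on $(0,2]$, and tends to $0$ as $\epsilon \to 0$. My candidate will simply be $\sigma(t) := \delta_E(t)$ on $[0,1]$, extended arbitrarily to a strictly positive function on $(1,\infty)$; this automatically satisfies all the qualitative requirements stated in the lemma.

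After disposing of the trivial cases $v=0$ or $w=0$, I would set $M := \norm{v+w}$ and $t := \norm{w}/(\norm{v}+\norm{w}) \in (0,1)$. The first key step is to observe that the hypothesis $\norm{v+\tfrac{1}{2}w} \geq \norm{v}$, together with the identity $v+w = 2(v+\tfrac{1}{2}w) - v$ and the triangle inequality, forces $M \geq 2\norm{v+\tfrac{1}{2}w} - \norm{v} \geq \norm{v}$. This allows me to normalize by $M$ without leaving the closed unit ball: the vectors $x := v/M$ and $y := (v+w)/M$ both satisfy $\norm{x}, \norm{y} \leq 1$. Furthermore $\norm{x-y} = \norm{w}/M \geq \norm{w}/(\norm{v}+\norm{w}) = t$, using $M \leq \norm{v}+\norm{w}$. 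This is the step that makes the final estimate depend only on $t$ and not on $M$ or on $\norm{v},\norm{w}$ separately.

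Next I would apply uniform convexity to $(x,y)$ together with monotonicity of $\delta_E$ to obtain $\norm{(x+y)/2} \leq 1 - \delta_E(t)$, while the hypothesis rewritten in terms of $x,y$ reads $\norm{(x+y)/2} = \norm{v+\tfrac{1}{2}w}/M \geq \norm{v}/M$. Combining these produces $\norm{v}/M \leq 1 - \delta_E(t)$, hence $M \geq \norm{v}/(1-\delta_E(t)) \geq \norm{v}(1+\delta_E(t))$ via the elementary inequality $1/(1-\delta) \geq 1+\delta$ valid on $[0,1)$. This is precisely the desired estimate with $\sigma(t) = \delta_E(t)$; the borderline case $\delta_E(t) = 1$ would force $\norm{v} = 0$, which the reduction has excluded.

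The only real obstacle I foresee is identifying the correct normalizing constant $M = \norm{v+w}$. This single choice simultaneously guarantees that the normalized vectors lie in the closed unit ball (via the lower bound $M \geq \norm{v}$ coming from the hypothesis) and that the distance $\norm{x-y}$ admits a lower bound depending only on $t$ (via the triangle-inequality upper bound $M \leq \norm{v}+\norm{w}$). Once this normalization is in place, the rest of the argument is essentially a one-line application of the definition of uniform convexity together with the monotonicity and vanishing properties of $\delta_E$.
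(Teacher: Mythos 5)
Your proof is correct and follows essentially the same route as the paper's: both normalize $v$ and $v+w$ by $\norm{v+w}$, apply the modulus of convexity $\delta_E$ to these two vectors, use the hypothesis $\norm{v+\tfrac12 w}\geq\norm{v}$ to bound the midpoint from below, and take $\sigma=\delta_E$ up to the elementary inequality $(1-\delta)^{-1}\geq 1+\delta$. Your write-up is if anything slightly more explicit about the two roles of the normalization (keeping $x,y$ in the unit ball and making $\norm{x-y}$ depend only on $t$), but there is no substantive difference.
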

\begin{proof}
 Note that by the triangle inequality $\norm{v+1/2 w}\geq \norm v$ easily implies $\norm{v+ w}\geq \norm v$. Let $\delta$ be the modulus of convexity of the space $E$. By definition we have:
\begin{gather*}
 \delta(\epsilon)\equiv \inf\left\{ 1-\norm{\frac{x+y}{2}} \ s.t. \ \norm x, \norm y \leq 1 \ \ \ \norm {x-y}\geq \epsilon \right\}
\end{gather*}
Consider the vectors $x=\alpha v$ $y=\alpha (v+w)$ where $\alpha=\norm{v+w}^{-1}\leq \norm{v}^{-1}$. Then:
\begin{gather*}
 1-\norm{\frac{x+y}{2}}=1-\alpha \norm{v+\frac{w}{2}}\geq \delta(\alpha \norm{w})\geq \delta\ton{\frac{\norm w}{\norm v + \norm w}}\\
\norm{v+w}\geq \norm{v+\frac{w}{2}} \ton{1-\delta\ton{\frac{\norm w}{\norm v + \norm w}}}^{-1}
\end{gather*}
Since $\norm{v+\frac{w}{2}}\geq \norm v$ and by the positivity of $\delta$ on $(0,\infty)$ if $E$ is uniformly convex, the thesis follows.
\end{proof}

Recall that all $L^p(X,\mu)$ spaces with $1<p<\infty$ are uniformly convex thanks to Clarkson's inequalities, and their modulus of convexity is a function that depends only on $p$ and not on the underling measure space $X$. For a reference on uniformly convex spaces, modulus of convexity and Clarkson's inequality, we cite his original work \cite{7}.

\section{\Ka condition}\label{sec_ka}
In this section, we prove the \Ka condition for a generic $p>1$ and show that it is not just a sufficient condition, but also a necessary one.
\begin{prop}[\Ka condition]
 If there exists a compact set $K\subset R$ and a $p$-superharmonic finite-valued function $\ka:R\setminus K\to \R$ with
\begin{gather*}
 \lim_{x\to \infty} \ka(x)=\infty
\end{gather*}
then $R$ is $p$-parabolic.
\end{prop}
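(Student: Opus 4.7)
My plan is to fix a compact set $K_0\supset K$ with $K$ contained in the interior of $K_0$ and with smooth (hence $p$-regular) boundary, and to show $\cp(K_0)=0$; since $K_0$ has nonempty interior, this will give the $p$-parabolicity of $R$.

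Because $\partial K_0$ is compact and $\ka$ is lower semicontinuous, the number $c:=\min_{\partial K_0}\ka$ is finite, and the minimum principle applied to the $p$-superharmonic function $\ka$ on $R\setminus K_0$ (together with $\lim_{x\to\infty}\ka=\infty$) gives $\ka\geq c$ on $R\setminus K_0$. Fix an exhaustion $\Omega_n\uparrow R$ by precompact open sets with smooth boundary containing $K_0$, set $M_n:=\min_{\partial\Omega_n}\ka\to\infty$, and let $h_n$ denote the $p$-potential of $(K_0,\Omega_n)$, which is $p$-harmonic in $\Omega_n\setminus K_0$ with $h_n=1$ on $\partial K_0$ and $h_n=0$ on $\partial\Omega_n$.

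The core step is the comparison of $\ka$ with the $p$-harmonic function
\begin{gather*}
u_n(x):=c+(M_n-c)\ton{1-h_n(x)},
\end{gather*}
which is $p$-harmonic since $1-h_n$ is $p$-harmonic whenever $h_n$ is, and affine transformations preserve this property. Its boundary values are $c$ on $\partial K_0$ and $M_n$ on $\partial\Omega_n$, so $\ka\geq u_n$ holds on $\partial(\Omega_n\setminus K_0)$ by construction, and the weak comparison principle between the $p$-superharmonic $\ka$ and the continuous $p$-harmonic $u_n$ yields $\ka\geq u_n$ on $\Omega_n\setminus K_0$. Evaluating at an arbitrary $x_0\in R\setminus K_0$ gives
\begin{gather*}
1-h_n(x_0)\leq\frac{\ka(x_0)-c}{M_n-c}\longrightarrow 0,
\end{gather*}
so $h_n(x_0)\to 1$ for every such $x_0$.

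I would then close the argument by invoking the standard equivalence in nonlinear potential theory: the pointwise (and monotone) convergence $h_n\nearrow 1$ of the $p$-capacity potentials of $K_0$ on an exhaustion of $R$ forces $\cp(K_0)=0$, which is what we wanted. The main technical point to verify along the way is the applicability of the comparison principle to a merely lower semicontinuous, finite-valued $p$-superharmonic $\ka$ against the continuous $p$-harmonic $u_n$; this is standard once the smooth boundaries $\partial K_0$ and $\partial\Omega_n$ ensure that $u_n$ attains its boundary data continuously, so the boundary inequality can be tested pointwise.
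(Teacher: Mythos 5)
Your proposal is correct and follows essentially the same route as the paper: compare $\ka$ on $\Omega_n\setminus K_0$ with an affine rescaling of the $p$-capacity potential $h_n$ of $(K_0,\Omega_n)$, use $\min_{\partial\Omega_n}\ka\to\infty$ to conclude $h_n\to 1$ pointwise, and hence $\cp(K_0)=0$. Your version is in fact slightly more careful than the paper's, since the offset by $c=\min_{\partial K_0}\ka$ removes the implicit assumption $\ka\geq 0$ near $\partial K_0$ in the paper's inequality $h_n\geq 1-\ka/m_n$.
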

\begin{proof}
 This condition was proved in \cite{2} in the case $p=2$, however since the only tool necessary for this proof is the comparison principle, it is easily extended to any $p>1$. An alternative proof can be found in \cite{99}.\\
Fix an open relatively compact set $D$ with $K\subset D$ (for simplicity, we may also assume $\partial D$ smooth), and fix an exhaustion $D_n$ of $R$ with $D_0\equiv D$. Set $m_n\equiv \min_{x\in \partial D_n} \ka(x)$, and consider for every $n$ the $p$-capacity potential $h_n$ of the couple $(\overline D,D_n)$. Since $\ka$ is superharmonic, it is easily seen that $h_n(x)\geq 1-\ka(x)/m_n$ for all $x\in D_n\setminus \overline D$. By letting $n$ go to infinity, we obtain that $h(x)\geq 1$ for all $x\in R$, where $h$ is the capacity potential of $(\overline D, R)$. Since by the maximum principle $h(x)\leq 1$ everywhere, $h(x)=1$ and so $\cp(\overline D)=0$.
\end{proof}

Observe that the hypothesis of $\ka$ being finite-valued can be dropped. In fact if $\ka$ is $p$-superharmonic, the set $\{x \ s.t. \ \ \ka(x)=\infty\}$ has $p$-capacity zero, and so the reasoning above would lead to $h(x)= 1$ except on a set of $p$-capacity zero, but this indeed implies $h(x)=1$ everywhere (see \cite{3} for the details).

Before proving the reverse of \Ka condition for any $p>1$, we present a short simpler proof in the case $p=2$ and we briefly describe the reasoning that brought us to the general proof. In the linear case, the sum of $2$-superharmonic functions is again $2$-superharmonic, but of course this fails to be true for a generic $p$. Thanks to linearity, it is easy to prove that:
\begin{prop}
Given a $2$-parabolic Riemannian manifold, for any compact set $K$ with smooth boundary (actually $p$-regular is enough), there exists a $2$-superharmonic continuous function $\ka:R\setminus K\to \R^+$ with $f|_{\partial K}=0$ and $\lim_{x\to \infty}\ka(x)=\infty$.

\end{prop}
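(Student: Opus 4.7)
The plan is to construct $\ka$ as an infinite sum of continuous $2$-superharmonic building blocks, exploiting the fact --- special to the linear case $p=2$ --- that sums of $2$-superharmonic functions are again $2$-superharmonic.

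Fix a smooth exhaustion $\Omega_0\Subset\Omega_1\Subset\cdots$ with $K\subset\Omega_0$ and $\bigcup_n\Omega_n=R$. For each $n\geq 1$, let $v_n$ be the $2$-harmonic function on $\Omega_n\setminus K$ with $v_n|_{\partial K}=0$ and $v_n|_{\partial\Omega_n}=1$, and extend it by $1$ on $R\setminus\Omega_n$. A short gluing / maximum-principle argument (or, equivalently, realising $v_n$ as a solution of an obstacle problem in the sense of Proposition \ref{prop_obs}) shows that $v_n$ is continuous and $2$-superharmonic on $R\setminus K$, with values in $[0,1]$ and vanishing on $\partial K$.

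The key input from $2$-parabolicity is the locally uniform vanishing $v_n\to 0$ on $R\setminus K$. Indeed, $1-v_n$ restricted to $\Omega_n$ is the capacity potential of $(\overline K,\Omega_n)$; these are monotone increasing in $n$, bounded above by $1$, and satisfy $\int\abs{\nabla(1-v_n)}^2\,dV=\cp(\overline K,\Omega_n)\to 0$ by $2$-parabolicity. Hence their pointwise limit is a bounded harmonic function on $R\setminus K$ of zero Dirichlet energy with boundary value $1$ on $\partial K$, so identically $1$; Dini's theorem then upgrades the monotone convergence $v_n\downarrow 0$ to locally uniform convergence on $R\setminus K$. Choose a subsequence $n_k\uparrow\infty$ with $\sup_{\overline\Omega_k\setminus K}v_{n_k}\leq 2^{-k}$ and set
\begin{gather*}
\ka(x):=\sum_{k=1}^\infty v_{n_k}(x).
\end{gather*}

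On each $\overline\Omega_m\setminus K$ the tail $\sum_{k\geq m}v_{n_k}$ is dominated by $\sum_{k\geq m}2^{-k}$, so the series converges locally uniformly; this gives continuity of $\ka$ together with $\ka|_{\partial K}=0$. Superharmonicity follows from the linearity of $\Delta_2$ on finite partial sums combined with the closure of $S_2$ under increasing pointwise limits noted earlier. Finally, if $x\notin\Omega_{n_M}$ then $v_{n_k}(x)=1$ for every $k\leq M$, so $\ka(x)\geq M$, proving $\ka(x)\to\infty$ as $x\to\infty$. The main obstacle is precisely the locally uniform vanishing $v_n\to 0$, which is the only point where $2$-parabolicity genuinely enters; everything else is Dirichlet-problem construction or routine series bookkeeping. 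This strategy collapses for $p\neq 2$ at the superharmonicity step, since sums of $p$-superharmonic functions need no longer be $p$-superharmonic --- which is why the general proof to follow will have to invoke the uniform-convexity tool in Lemma \ref{lemma_*}.
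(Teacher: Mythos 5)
Your construction is exactly the one the paper uses: the building blocks $v_{n}$ are the functions $\tilde h_n=1-h_n$ (with $h_n$ the capacity potential of $(K,\Omega_n)$), parabolicity plus Dini gives locally uniform decay, and a rapidly decaying subsequence is summed, with superharmonicity coming from linearity and closure under increasing limits. The proposal is correct and, if anything, supplies more detail than the paper (the gluing across $\partial\Omega_n$ and the divergence estimate $\ka\geq M$ outside $\Omega_{n_M}$), so nothing further is needed.
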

\begin{proof}
Consider a regular (=with smooth boundary) exhaustion $\{K_n\}_{n=0}^\infty$ of $R$ with $K_0\equiv K$. For any $n\geq1$ define $h_n$ to be the $p$-potential of $(K,K_n)$. By the comparison principle, the sequence $\tilde h_n =1-h_n$ is a decreasing sequence, and since $R$ is $2$-parabolic the limit function $\tilde h$ is the zero function. By Dini's theorem, the sequence $\tilde h_n$ converges to zero locally uniformly, so it is not hard to choose a subsequence $\tilde h_{n(k)}$ such that the series $\sum_{k=1}^\infty \tilde h_{n(k)}$ converges locally uniformly to a continuous function. It is straightforward to see that $\ka=\sum_{k=1}^{\infty} \tilde h_{n(k)}$ has all the desidered properties.
\end{proof}

For the nonlinear case, even though this proof doesn't apply, the idea is similar in some aspects. Indeed, we will build an increasing locally uniformly bounded sequence of $p$-superharmonic functions, and the limit of this sequence will be the function $\ka$.

The idea behind the proof in the nonlinear case is to extend the following well-known result about sets of $p$-capacity zero.

\begin{prop}\label{prop_mimic}
A set $E\subset \R^n$ is of $p$-capacity zero if and only if there exists a $p$-superharmonic $s$ function with $s|_E=\infty$.
\end{prop}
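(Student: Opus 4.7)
I would prove the two implications by quite different techniques.

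$(\Leftarrow)$: Assume $s$ is $p$-superharmonic with $s|_E=\infty$. By inner regularity of the $p$-capacity, it suffices to show $\cp(K,\Omega)=0$ for every compact $K\subset E$ and bounded open $\Omega\supset K$ contained in the domain of $s$; we may assume $s\geq 0$. The plan is to construct admissible test functions for $\cp(K,\Omega)$ directly from $s$: a natural choice is $\varphi_M=\eta\cdot\min(s/M,1)$, where $\eta\in C_c^\infty(\Omega)$ is a cutoff equal to $1$ near $K$. This $\varphi_M$ equals $1$ on $K$ (since $s=\infty$ there), and the cutoff cross-term $\int\abs{\nabla\eta}^p\min(s/M,1)^p$ vanishes as $M\to\infty$ by dominated convergence, since $\{s=\infty\}$ has measure zero. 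The main term $M^{-p}\int\eta^p\abs{\nabla\min(s,M)}^p$ I would control by slicing $\{0<s<M\}$ into dyadic super-level sets and applying the level-set capacity identity of Lemma~\ref{lemma_cap} to the $p$-potentials of $\{s\geq 2^j\}$, giving a growth rate for $\int\abs{\nabla\min(s,M)}^p$ strictly slower than $M^p$ and hence capacity going to zero.

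$(\Rightarrow)$: Assume $\cp(E)=0$. By exhaustion we may assume $E\subset\Omega$ for some bounded open $\Omega$. For each $k\in\N$ choose open $U_k\supset E$ with $\cp(U_k,\Omega)<2^{-kp}$ and let $u_k$ be its $p$-potential on $\Omega$: $p$-superharmonic, equal to $1$ on $U_k$, vanishing on $\partial\Omega$, with $\norm{\nabla u_k}_p<2^{-k}$. In the linear case $p=2$ the sum $\sum_k u_k$ is itself $2$-superharmonic and equals $+\infty$ on $E$, completing the proof. In the nonlinear case sums of $p$-superharmonic functions need not be $p$-superharmonic, so I would instead iterate the obstacle problem of Proposition~\ref{prop_obs}: set $s_1=u_1$ and, for $n\geq 2$, let $s_n$ be the solution of $K_{\psi_n}$ on $\Omega$ with obstacle $\psi_n=s_{n-1}+u_n$. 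Each $s_n$ is $p$-superharmonic with $s_n\geq\psi_n\geq n$ on $E$; by the minimising property of the obstacle-problem solution (Proposition~\ref{prop_obs}) combined with Minkowski's inequality, $\norm{\nabla s_n}_p\leq\norm{\nabla\psi_n}_p\leq\norm{\nabla s_{n-1}}_p+\norm{\nabla u_n}_p$, which telescopes to a uniform $W^{1,p}$-bound. Since the sequence is monotone increasing, it converges pointwise to some $s\in W^{1,p}(\Omega)$, which is $p$-superharmonic by closure under right-directed limits and equals $+\infty$ on $E$.

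\emph{Main obstacle.} The truly nonlinear point is in $(\Rightarrow)$: the failure of additivity of $p$-superharmonic functions forces the obstacle-problem iteration in place of the linear sum, and the minimising property of Proposition~\ref{prop_obs} is precisely what makes Minkowski's inequality yield a telescoping Dirichlet bound. In $(\Leftarrow)$ the delicate step is extracting the sharp decay rate for $\int\abs{\nabla\min(s,M)}^p$ via the level-set structure (Lemma~\ref{lemma_cap}), since the naive Caccioppoli bound only gives $O(M^p)$ and is too weak to conclude.
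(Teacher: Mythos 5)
The paper itself does not prove this proposition; it simply cites Theorem 10.1 of \cite{3}, so the comparison below is with the standard argument rather than with anything in the text. Your $(\Rightarrow)$ direction is essentially sound: the obstacle-problem iteration with obstacles $\psi_n=s_{n-1}+u_n$, the Minkowski/minimality telescoping of $\norm{\nabla s_n}_p$, and closure of $S_p$ under increasing limits is exactly the mechanism the paper uses for its main theorem, and it is the right nonlinear substitute for summing potentials. Two small points you should still address there: you need $s_n\geq\psi_n$ \emph{everywhere} on $E$ (not just a.e., since $E$ has measure zero) — this follows because $\psi_n$ is lower semicontinuous and the lsc regularization of the solution dominates it pointwise; and the reduction "we may assume $E$ bounded" is not free for $p\neq 2$ (you cannot sum the resulting superharmonic functions), though it can be repaired by feeding the potentials of neighborhoods of $E\cap B_n$ into the same iteration.

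The genuine gap is in $(\Leftarrow)$, precisely at the step you flag as delicate. The level-set identity of Lemma \ref{lemma_cap} holds only for the $p$-potential of a condenser, i.e.\ for a $p$-harmonic function with prescribed constant boundary values. For a general $p$-superharmonic $s$, the only comparison available from minimality is
\begin{gather*}
\int_{\{t<s<\lambda\}}\abs{\nabla s}^p\,dV\;\geq\;(\lambda-t)^p\,\cp\ton{\{s\geq\lambda\},\{s>t\}},
\end{gather*}
which is a \emph{lower} bound on the slice energy and therefore cannot produce the upper bound $\int\eta^p\abs{\nabla\min(s,M)}^p=o(M^p)$ that your argument requires; the dyadic decomposition does not close. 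The standard way to get the missing decay is the logarithmic Caccioppoli inequality for positive supersolutions, $\int\eta^p\abs{\nabla\log u}^p\,dV\leq C(p)\int\abs{\nabla\eta}^p\,dV$, applied to the truncations $\min\ton{1,(\log s-j)_+/j}$, whose energies then decay like $j^{-p}$. Alternatively, and more in the spirit of this paper, you can bypass energy estimates entirely: for compact $K\subset E$ and the $p$-potential $h$ of $(K,\Omega)$, the comparison principle gives $h\leq\min(s,\lambda)/\lambda$ on $\Omega\setminus K$ for every $\lambda>0$; letting $\lambda\to\infty$ and using that a $p$-superharmonic function is finite a.e.\ yields $h=0$ a.e., hence $\cp(K,\Omega)=0$. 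This is exactly the argument of the Khas'minskii proposition in Section \ref{sec_ka}.
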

\begin{proof}
 See theorem 10.1 in \cite{3} for the proof.
\end{proof}

Consider the $p$-Royden compactification $R_p^*$ of the manifold $R$ (like every compactification, the boundary $\Gamma_p=R^*_p\setminus R$ reflects in some sense the behaviour of $R$ at infinity). The concept of $p$-capacity can be extended to subsets of $R^*_p$, and it turns out that $R$ is $p$-parabolic if and only of $\cp(\Gamma_p)=0$ (see for example \cite{11}). Then in some sense, by mimicking the proof of proposition \ref{prop_mimic}, we get our statement. There are although some tecnical aspects to be considered, for example the boundedness assumption on the domain $\Omega$ makes it impossible to use the theory of the obstacle problem to solve it on the complement of a compact set in $R$, and also some convergence properties of the solutions are not so obvious and need some careful consideration.

For the sake of simplicity, in this article we chose to limit the use of abstract technical tools like the $p$-Royden compactification and follow instead a more direct approach.

We first prove that if $R$ is $p$-parabolic, then there exists a proper function $f:R\to \R$ with finite $p$-Dirichlet integral.
\begin{prop}
 Let $R$ be a $p$-parabolic Riemannian manifold. Then there exists a positive continuous function $f:R\to \R$ such that:
\begin{gather*}
 \int_{R} \abs{\nabla f}^p dV <\infty \ \ \ \ \ \ \lim_{x\to \infty} f(x)=\infty
\end{gather*}
\end{prop}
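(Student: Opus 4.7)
The plan is to build $f$ as a locally uniformly convergent series of complementary capacity potentials associated with a smooth exhaustion of $R$.

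Fix a smooth compact exhaustion $\{K_n\}_{n\geq 0}$ of $R$ with $K_0$ having nonempty interior, and let $h_n$ be the $p$-capacity potential of $(K_0,K_n)$, extended continuously by $1$ on $K_0$ and by $0$ outside $K_n$. By the comparison principle the sequence $\{h_n\}$ is monotone increasing on $R$, so it has a pointwise limit $h^{*}$, which is a bounded $p$-superharmonic function equal to $1$ on $K_0$. Since $R$ is $p$-parabolic, the maximum principle forces $h^{*}\equiv 1$, and by Dini's theorem (applied on each compact $K_j$) the convergence is locally uniform. In particular $\tilde h_n := 1 - h_n$ tends to $0$ locally uniformly, and moreover $\norm{\nabla\tilde h_n}_{L^p(R)}^{p} = \cp(K_0,K_n) \to 0$, again by $p$-parabolicity together with the monotonicity estimate from Lemma \ref{lemma_cap}.

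Extract a subsequence $\{n(k)\}$ such that simultaneously
\[
\sup_{K_k}\tilde h_{n(k)}\leq 2^{-k}\qquad\text{and}\qquad \norm{\nabla\tilde h_{n(k)}}_{L^p(R)}\leq 2^{-k},
\]
which is possible because both quantities vanish as $n\to\infty$. Define $f := \sum_{k=1}^{\infty}\tilde h_{n(k)}$. The first bound forces local uniform convergence of the series, so $f$ is a nonnegative continuous function on $R$. For the Dirichlet integral, Minkowski's inequality applied to the partial sums $f_N = \sum_{k=1}^{N}\tilde h_{n(k)}$ yields $\norm{\nabla f_N}_{L^p(R)}\leq \sum_{k=1}^{N} 2^{-k}\leq 1$, and in fact the sequence $\nabla f_N$ is Cauchy in $L^p(R)$. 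Its $L^p$ limit agrees with $\nabla f$ in the distributional sense because $f_N\to f$ locally uniformly, so $\nabla f\in L^p(R)$ and $f$ has finite $p$-Dirichlet integral.

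Finally, to check that $f(x)\to\infty$ as $x\to\infty$, fix $M\in\N$: any point $x\notin K_{n(M)}$ lies outside $K_{n(k)}$ for every $k\leq M$, whence $\tilde h_{n(k)}(x)=1$ for those $k$ and therefore $f(x)\geq M$. Since $K_{n(M)}$ is compact, this is exactly properness. The main technical subtlety is the simultaneous control of the Dirichlet norm and of the local uniform convergence of the series; both decays follow from $p$-parabolicity (the first via the vanishing of $\cp(K_0,K_n)$, the second via Dini on an exhausting family of compacta) and can be arranged at once by a single diagonal choice of $n(k)$.
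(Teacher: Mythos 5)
Your proposal is correct and follows essentially the same route as the paper: the complementary capacity potentials $\tilde h_n = 1-h_n$ of an exhaustion decrease to $0$ locally uniformly with vanishing $p$-Dirichlet energy, and a diagonal subsequence makes the series $\sum_k \tilde h_{n(k)}$ converge to a continuous proper function of finite energy. Your write-up merely makes explicit the steps (Dini, Minkowski, the properness count $f\geq M$ off $K_{n(M)}$) that the paper leaves as "easily verified."
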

\begin{proof}
 Fix an exhaustion $\{D_n\}_{n=0}^{\infty}$ of $R$ such that every $D_n$ has smooth boundary, and let $\{h_n\}_{n=1}^{\infty}$ be the $p$-capacity potential of the couple $(D_0,D_n)$. Then by an easy application of the comparison principle the sequence:
\begin{gather*}
 \tilde h_n(x) \equiv \begin{cases}
                    0 & \text{if }x\in D_0\\
1-h_n(x) & \text{if }x\in D_n\setminus D_0\\
1 & \text{if }x\in D_n^C
                   \end{cases}
\end{gather*}
 is a decreasing sequence of continuous function converging pointwise to $0$ (and so also locally uniformly by Dini's theorem) and also $\int_{R}\abs{\nabla \tilde h_n}^p dV \to 0$. So we can extract a subsequence $\tilde h_{n(k)}$ such that
\begin{gather*}
 0\leq \tilde h_{n(k)}(x)\leq \frac{1}{2^k} \ \ \ \ \forall x\in D_k \ \ \ \ \wedge \ \ \ \ \ \int_{R}\abs{\nabla \tilde h_{n(k)}}^p dV<\frac1 {2^k}
\end{gather*}
It is easily verified that $f(x)=\sum_{k=1}^{\infty} \tilde h _{n(k)}(x)$ has all the desidered properties.
\end{proof}

 We are now ready to prove the reverse $\Ka$ condition, i.e.:
\begin{teo}
Given a $p$-parabolic manifold $R$ and an open nonempty compact $K\subset R$ with smooth boudary, there exists a continuous positive superharmonic function $\ka:R\setminus \overline K\to \R$ such that
\begin{gather*}
 \lim_{x\to \infty} \ka(x)=\infty
\end{gather*}
\end{teo}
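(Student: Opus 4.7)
My plan is to construct $\ka$ as the pointwise monotone limit of solutions to a family of obstacle problems posed on an exhaustion of $R\setminus\overline K$, taking as obstacle the proper function of finite $p$-Dirichlet energy produced by the preceding proposition (whose existence is precisely where the hypothesis of $p$-parabolicity enters). Fix a smooth exhaustion $\{D_n\}_{n=0}^{\infty}$ of $R$ with $\overline K\subset D_0$, and let $f$ be this function, so $f$ is positive, continuous, tends to $\infty$ at infinity, and satisfies $\int_R\abs{\nabla f}^p dV<\infty$ (adding a positive constant if desired, we may assume $f$ is bounded away from $0$). Set $\Omega_n:=D_n\setminus\overline K$. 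By Proposition~\ref{prop_obs}, the obstacle problem $K_f$ on $\Omega_n$ (with obstacle and boundary datum both equal to $f$) admits a unique continuous solution $s_n$, which is $p$-superharmonic, satisfies $s_n\geq f$, and, by Remark~\ref{rem_min}, has $\D_p(s_n)\leq \D_p(f|_{\Omega_n})\leq \D_p(f)$. Since $\partial\Omega_n$ is smooth and $f$ is continuous on $\overline\Omega_n$, Proposition~\ref{prop_cont} gives continuity of $s_n$ up to $\partial\Omega_n$ with $s_n=f$ there, and extending by $s_n\equiv f$ on $R\setminus D_n$ produces a continuous function on $R\setminus\overline K$ still denoted $s_n$.

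Next I would verify that the extended $s_n$ form a pointwise decreasing sequence. For $m>n$, the extension of $s_n$ lies in $K_f$ on $\Omega_m$: it is $\geq f$ globally and equals $f$ on $\partial\Omega_m$ (on $\partial K$ by the boundary datum carried by $s_n$, and on $\partial D_m\subset R\setminus D_n$ by the way the extension was defined). The minimality of $s_m$ inside its admissible class then forces $s_m\leq s_n$ on $\Omega_m$. Setting $s(x):=\lim_n s_n(x)$ yields $s\geq f$, so $s(x)\to\infty$ as $x\to\infty$, while $s\leq s_N$ on $\Omega_N$ for every $N$ shows that $s$ is finite and locally bounded. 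Closure of $p$-superharmonic functions under locally bounded downward-directed limits (via lower semicontinuous regularization if needed) then makes $s$ a $p$-superharmonic function on $R\setminus\overline K$.

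The delicate point, and the main obstacle of the proof, is to upgrade $s$ to a \emph{continuous} function, since a pointwise decreasing limit of continuous functions is in general only upper semicontinuous. This is where Lemma~\ref{lemma_*} is invoked: the uniform bound $\D_p(s_n)\leq \D_p(f)$ combined with the uniform convexity of $L^p$ applied to the sequence of gradients $\nabla s_n$ allows one to upgrade the weak convergence of $\nabla s_n$ to strong convergence in $L^p_{loc}(R\setminus\overline K)$, whence $s_n\to s$ in $W^{1,p}_{loc}$. Combining this with the classical $C^{1,\alpha}$ interior regularity of $p$-harmonic functions on the noncoincidence set $\{s>f\}$ and the trivial continuity of $s$ on the coincidence set $\{s=f\}$ (where $s$ agrees with the continuous function $f$) yields continuity of $\ka:=s$. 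The resulting $\ka$ is continuous, strictly positive, $p$-superharmonic on $R\setminus\overline K$, and satisfies $\lim_{x\to\infty}\ka(x)=\infty$, as required.
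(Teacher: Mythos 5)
The central step of your argument --- that the solutions $s_n$ of the obstacle problems on the exhausting domains $\Omega_n$ form a \emph{decreasing} sequence --- is wrong, and the error is fatal because everything you deduce afterwards (finiteness and local boundedness of the limit, hence its superharmonicity and continuity) rests on it. The minimality of $s_m$ in its admissible class is minimality of the $p$-Dirichlet \emph{integral}, not a pointwise minimality, so it does not force $s_m\le s_n$; and the other minimality in Proposition \ref{prop_obs} (smallest $p$-superharmonic function in $K_{\theta,\psi}$) does not apply to the extension of $s_n$ by $f$, since $f$ itself is not $p$-superharmonic. In fact the sequence is \emph{increasing}: on the open set $A=\{x\in\Omega_n:\ s_m(x)<s_n(x)\}$ one has $s_n>s_m\ge f$, so $s_n$ is $p$-harmonic there (non-coincidence set), while $s_m$ is $p$-superharmonic on $A$ and $s_m\ge s_n$ on $\partial A$ (on $\partial K$ both equal $f$, on $\partial D_n$ one has $s_n=f\le s_m$, and elsewhere $s_m=s_n$); the comparison principle then forces $A=\emptyset$.

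Once the monotonicity is corrected, the real difficulty of the theorem appears: you must show that the increasing limit is locally bounded rather than identically $+\infty$, and nothing in your proposal addresses this. A finite limit of your scheme would be a $p$-superharmonic majorant of the proper function $f$, i.e., essentially the object the theorem asserts to exist, so its finiteness cannot be taken for granted. This is precisely why the paper does not use the full $f$ as obstacle but runs a double induction: at stage $n$ it uses the small obstacles $s^{(n)}+\min\{j^{-1}f,1\}$ and shows, via the energy bounds and Lemma \ref{lemma_*}, that the corresponding solutions converge to $s^{(n)}$ as $j\to\infty$ because $\norm{\nabla f_j}_p\to 0$; a suitable $\bar j$ then makes the increment smaller than $2^{-n-1}$ on a large compact set while still raising the function to $n+1$ near infinity, which keeps the increasing sequence locally uniformly bounded and continuous in the limit. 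Your invocation of Lemma \ref{lemma_*} does not substitute for this: in your setting there is no quantity tending to zero to feed into $\sigma$, so the lemma does not upgrade weak convergence of $\nabla s_n$ to strong convergence, and in any case $W^{1,p}_{loc}$ convergence would not by itself yield continuity of the limit. To salvage your route you would need a separate argument for local boundedness (for instance a capacity estimate on a compact set of positive relative $p$-capacity in $R\setminus\overline K$, using the uniform bound $\D_p(s_n-f)\le 2^p\D_p(f)$), which is exactly the missing heart of the proof.
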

\begin{proof}
 Fix a continuous proper function $f:R\to \R^+$ with finite Dirichlet integral such that $f=0$ on a compact neighborhood of $K$, and let $D_n$ be a smooth exhaustion of $R$ such that $f|_{D_n^C}\geq n$. We want to build by induction an increasing sequence of continuous functions $s^{(n)} \in L^{1,p}(R)$ $p$-superharmonic in $\overline{K}^C$ with $s^{(n)}|_K=0$ and such that $s^{(n)}=n$ in a neighborhood of infinity (say $S_n^C$, where $S_n$ is compact). Moreover we will ask that $s^{(n)}$ is locally uniformly bounded, so that $\ka(x)\equiv \lim_n s^{(n)}(x)$ is finite in $R$ and has all the desidered properties.\\
Let $s^{(0)}\equiv 0$, and suppose by induction that an $s^{(n)}$ with the desidered property exists. Hereafter $n$ is fixed, so for simplicity we will write $s^{(n)}\equiv s$, $s^{(n+1)}\equiv s^+$ and $S_n = S$. Define the functions $ f_j(x)\equiv \min\{j^{-1}f(x),1\}$, and consider the obstacle problems on $\Omega_j\equiv D_{j+1}\setminus \overline D_0$ given by the obstacle $\psi_j=s+ f_j$ \footnote{Note that $\psi_j\in L^{1,p}_0(R)$, in fact for every $j$ $\psi_j=n+1$ in a neighborhood of infinity}. For any $j$, the solution $h_j$ to this obstacle problem is a $p$-superharmonic function defined on $\Omega_j$ bounded above by $n+1$ and whose restriction to $\partial D_0$ is zero. If $j$ is large enough such that $s=n$ on $D_{j}^C$ (i.e. $S\subset D_j$), then the function $h_j$ is forced to be equal to $n+1$ on $D_{j+1}\setminus D_{j}$ and so the function:
\begin{gather*}
 \tilde h_j(x)\equiv \begin{cases}
                   h_j(x) & x\in \Omega_j\\
0 & x\in \overline D_0\\
n+1 & x\in D_{j+1}^C
                  \end{cases}
\end{gather*}
is a continuous function on $R$, $p$-superharmonic in $\overline{D_0}^C$. If we are able to show that $\tilde h_j$ converges locally uniformly to $s$, then we can choose an index $\bar j$ large enough to have $\sup_{x\in D_{n+1}} \abs{\tilde h_{\bar j} (x)-s(x)}<2^{-n-1}$, and so the function $s^+=\tilde h_{\bar j}$ has all the desidered properties.

For this aim, consider $\delta_j \equiv h_{j}-s$. Since the sequence $h_{j}$ is decreasing thanks to the properties of the solution to the obstacle problems, so is $\delta_j$ and therefore it converges pointwise to a function $\delta\geq 0$. By the minimizing properties of $h_j$, we have that
\begin{gather*}
 \norm{\nabla h_j}_p\leq \norm{\nabla s + \nabla f_j}_p\leq \norm{\nabla s}_p + \norm{\nabla f}_p\\
\norm{\nabla \delta_j}_p\leq 2\norm{\nabla s} + \norm{\nabla f}\leq C
\end{gather*}
and a standard weak-compactness argument in reflexive spaces shows that $\delta\in L^{1,p}_0(R)$ with $\nabla \delta_j \to \nabla \delta$ in the weak $L^p$ sense (see for example lemma 1.33 in \cite{3}). Now we prove that $\D_p(\delta_j)\to 0$ so that $\D_p(\delta)=0$, and since $\delta=0$ on $D_0$ we conclude $\delta=0$. Note also that since the limit function $\delta$ is continuous, Dini's theorem assures that the convergence is locally uniform.

Let $\lambda>0$ (for example $\lambda = 1/2$), and consider the function $g(x)\equiv \min\{s+\lambda \delta_j,n\}$. It is quite clear that $s$ is the solution to the obstacle problem relative to itself on $S\setminus \overline {D_0}$, and since $\delta_j\geq0$ with $\delta_j=0$ on $D_0$, $g\geq s$ and $g-s\in W^{1,p}_0(S\setminus \overline {D_0})$. The minimizing property for solutions to the $p$-laplace equation then guarantees that:
\begin{gather*}
\norm{\nabla s + \lambda \nabla \delta_j}_p^p\equiv \int_{R} \abs{\nabla s + \lambda \nabla \delta_j}^p dV \geq \int_{S\setminus \overline{D_0}} \abs{\nabla g}^p dV\geq\\
\geq \int_{S\setminus \overline{D_0}} \abs{\nabla s}^p dV = \norm{\nabla s}_p^p
\end{gather*}
Recalling that also $h_j=s+\delta_j$ is solution to an obstacle problem on $D_{j+1}\setminus D_0$, we get:
\begin{gather*}
 \norm{\nabla s + \nabla \delta_j}_p=\ton{\int_{R} \abs{\nabla \tilde h_j}^p dV}^{1/p}=\ton{\int_{\Omega_j} \abs{\nabla h_j}^p dV}^{1/p}\leq \\
\leq \ton{\int_{\Omega_j} \abs{\nabla s + \nabla f_j}^p dV}^{1/p}\leq \norm{\nabla s +\nabla f_j}_p \leq \norm{\nabla s }_p + \norm{\nabla {f_j}}_p
\end{gather*}
Uding lemma \ref{lemma_*} we conclude:
\begin{gather*}
 \norm{\nabla s}_p \ton{1+\sigma\ton{\frac{\norm{\nabla \delta_j}_p}{ \norm{\nabla s}_p +\norm{\nabla \delta_j}_p}}}\leq \norm{\nabla s}_p + \norm{\nabla f_j}_p
\end{gather*}
Since $\norm{\nabla f_j}_p \to 0$ as $j$ goes to infinity and by the properties of the function $\sigma$:
\begin{gather*}
\lim_{j\to \infty}\D_p(\delta_j)\equiv \lim_{j\to \infty} \norm{\nabla \delta_j}_p^p =0
\end{gather*}

\end{proof}

\begin{rem}
 \rm Since $\norm{\nabla \tilde h_j}_p\leq \norm{\nabla s^{(n)}}_p+\norm{\nabla \delta_j}_p$, if for each induction step we choose $\bar j$ such that $\norm{\nabla \delta_{\bar j}}_p<2^{-n}$, the function $\ka=\lim_{n} s^{(n)}$ has finite $p$-Dirichlet integral.
\end{rem}

\begin{rem}
 \rm In the previous theorem we built a function $\ka$ which is proper and continuous in $R$, $p$-superharmonic in $K^C$ and zero on $K$ assuming $K$ compact with smooth boundary and with non-empty interior. However it is clear from the proof that these assumptions can be weakened. In fact, the only properties we need are that if a function $\delta$ is constant and zero on $K$, than it has to be zero everywhere on $R$, and the obstacle problem relative to $\D_j\setminus K$ has to be solvable with continuity on the boundary. From these we notice that it is sufficient to assume $K$ $p-$regular, which implies also that $cap_p(K,\D_j)>0$ and so $\delta=0$.
\end{rem}

\section{Evans potentials}
We conclude this work with some remarks on the Evans potentials for $p$-parabolic manifolds. Given a compact set with nonempty interior and smooth boundary $K\subset R$, we call $p$-Evans potential a function $\E:R\setminus K\to \R$ $p$-harmonic where defined such that:
\begin{gather*}
 \lim_{x\to \infty } \E(x)=\infty \ \ \ \ \ \lim_{x\to \partial K} \E(x)=0
\end{gather*}
It is evident that if such a function exists, then the \Ka condition guarantees the $p$-parabolicity of the manifold $R$. It is interesting to investigate whether also the reverse implication holds. In \cite{19} and \cite{5}, Nakai and Sario prove that $2$-parabolicity of Riemannian surfaces is completely characterized by the existence of such functions. In particular they prove that:
\begin{teo}\label{teo_sn}
Given a $p$-parabolic Riemannian surface $R$, and an open precompact set $R_0$, there exists a ($2-$)harmonic function $\E:R\setminus R_0\to \R^+$ which is zero on the boundary of $R_0$ and goes to infinity as $x$ goes to infinity. Moreover:
\begin{gather}\label{eq_evp}
 \int_{\{0\leq \E(x)\leq c\}} \abs{\nabla \E(x)}^2dV\leq 2\pi c
\end{gather}
\end{teo}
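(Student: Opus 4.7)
The plan is to adapt the classical Nakai–Sario construction, exploiting two features that are special to $p=2$ on a Riemann surface: linearity of the Dirichlet problem, so that harmonic measures can be freely rescaled, and flux conservation for harmonic functions across any curve in a fixed homology class. Enlarge $R_0$ to a smooth exhaustion $R_0\Subset R_1\Subset R_2\Subset\cdots$ of $R$, and on each annulus $\Omega_n:=R_n\setminus\overline{R_0}$ solve the Dirichlet problem for the $2$-harmonic measure $\omega_n$: harmonic on $\Omega_n$, equal to $0$ on $\partial R_0$ and $1$ on $\partial R_n$ (existence and continuity up to the boundary come from proposition \ref{prop_cont}). Set $a_n:=2\pi/\operatorname{Cap}_2(R_0,R_n)$ and $\E_n:=a_n\,\omega_n$; the hypothesis of $2$-parabolicity forces $\operatorname{Cap}_2(R_0,R_n)\to 0$, whence $a_n\to\infty$.

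The crucial feature of this normalization is that the flux of $\E_n$ across $\partial R_0$ equals exactly $2\pi$ for every $n$, because Green's identity applied to the capacity potential gives $\int_{\partial R_0}\!\ast d\omega_n=\operatorname{Cap}_2(R_0,R_n)$. Since $\E_n$ is harmonic, the same flux is carried by every separating curve in $\Omega_n$ homologous to $\partial R_0$, so an integration by parts on the sub-level region $\{0\leq \E_n\leq c\}$ (for $c<a_n$) yields the sharp identity
\begin{equation*}
\int_{\{0\leq \E_n\leq c\}}\abs{\nabla \E_n}^2\,dV \;=\; c\int_{\{\E_n=c\}}\!\ast d\E_n \;=\; 2\pi c,
\end{equation*}
which becomes (\ref{eq_evp}) in the limit.

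This identity provides uniform Dirichlet bounds on every bounded sub-level region, and together with the positivity of $\E_n$ and Harnack's inequality it supplies uniform $C^{1,\alpha}_{loc}$ bounds on compact subsets of $R\setminus\overline{R_0}$; a diagonal subsequence then converges locally uniformly to a harmonic $\E\geq 0$ with $\E|_{\partial R_0}=0$, and the energy bound passes to the limit. The delicate point, which I expect to be the main obstacle, is to show that $\E(x)\to\infty$ as $x\to\infty$: although $a_n\to\infty$ while $\omega_n\to 0$ locally uniformly (by parabolicity), the product $a_n\omega_n$ is \emph{a priori} indeterminate, and properness does not follow from parabolicity alone. The natural strategy is to prove $\inf_{\partial R_m}\E_n\to\infty$ as $m\to\infty$ uniformly in $n\geq m$, by combining the conserved flux $\int_{\{\E_n=t\}}\!\ast d\E_n=2\pi$ with an iterated Harnack argument on concentric topological annuli. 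A conceptually cleaner route is to exploit the harmonic conjugate $\tilde\E_n$, well-defined on $\Omega_n$ modulo a single period of $2\pi$, which realizes $\E_n+i\tilde\E_n$ as a holomorphic map into a cylinder and reduces properness of the limit to the divergence of the period $a_n$.
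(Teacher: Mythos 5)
You should first be aware that the paper does not prove this statement: Theorem \ref{teo_sn} is quoted from Nakai and from Theorems 12.F and 13.A of Sario--Nakai, and the text explicitly describes their argument as building $\E$ from a convex combination of Green kernels on the $2$-Royden compactification. So there is no in-paper proof to compare against, and your proposal has to stand on its own.

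Your flux normalization is correct as far as it goes: with $\E_n=a_n\omega_n$ and $a_n=2\pi/\operatorname{Cap}_2(R_0,R_n)$ the flux across every level set is $2\pi$, the identity $\int_{\{0\leq\E_n\leq c\}}\abs{\nabla\E_n}^2dV=2\pi c$ holds for $c\leq a_n$, and local boundedness plus compactness of the family can indeed be extracted from that identity. But the step you yourself flag as the main obstacle --- properness of the limit --- is not merely unproved; for this normalization it is \emph{false} in general. Take a parabolic surface with two rotationally symmetric ends whose truncated capacities along the chosen exhaustion satisfy $\operatorname{cap}_1(R_0,R_n)\ll\operatorname{cap}_2(R_0,R_n)$ (e.g.\ a very thin cylindrical end glued to a planar end; both ends are parabolic, hence so is $R$). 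The total flux $2\pi$ of $\E_n$ splits among the ends in proportion to $\operatorname{cap}_i(R_0,R_n)/\operatorname{cap}(R_0,R_n)$, so at a fixed depth inside end $1$ one computes $\E_n\approx 2\pi\,\frac{\operatorname{cap}_1(R_0,R_n)}{\operatorname{cap}(R_0,R_n)}\cdot\frac{1}{\operatorname{cap}_1(R_0,R_m)}\to 0$, and the limit $\E$ vanishes identically on the more parabolic end. Your two suggested repairs do not address this: the conserved total flux together with Harnack only forces $\max_{\partial R_m}\E\to\infty$, and the max does not control the min when $\partial R_m$ is disconnected (or when a single end branches); likewise the harmonic conjugate has one period per boundary component and only their \emph{sum} equals $a_n$, so divergence of $a_n$ says nothing about the individual periods. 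What the theorem actually requires is a normalization that distributes flux over the ideal boundary in proportion to the local rate at which capacity degenerates --- this is exactly what the Nakai--Sario selection of convex combinations of kernels on the Royden compactification achieves, and what a single global constant $a_n$ cannot.
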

This is the content of \cite{19} and theorems 12.F and 13.A in \cite{5}. Clearly the constant $2\pi$ in equation \ref{eq_evp} can be substituted by any other positive constant. As noted in the Appendix to \cite{5} (in particular pag. 400), with similar arguments and with the help of the classical potential theory (\cite{37} might be of help in some technical details), it is possible to prove the existence of $2$-Evans potentials for a generic $n$-dimensional $2$-parabolic Riemannian manifold.

This argument however is not adaptable to the nonlinear case ($p\neq 2$). In fact it relies heavily on the harmonicity of Green potentials and on tools like the energy and transfinite diameter of a set that are not available in the nonlinear contest. In the end the potential $\E$ is build as a special convex combination of Green kernels defined on the $2$-Royden compactification of $R$, and while convex combinations preserve $2$-harmonicity, this is evidently not the case when $p\neq 2$.

Since $p$-harmonic functions minimize the $p$-Dirichlet of functions with the same boundary values, it would be interesting from a theoretical point of view to prove existence of $p$-Evans potentials and maybe also to determine some of their properties. From the practical point of view such potentials could be used to get informations on the underlying manifold $R$, for example they can be used to improve the Kelvin-Nevanlinna-Royden criterion for $p$-parabolicity as shown in \cite{1}.

Even though we were not able to prove the existence of such potentials in the generic case, some particular cases are easier to manage. As shown in \cite{99}, conclusions similar to the ones in theorem \ref{teo_sn} can be easily obtained in the case $R$ is a model manifold or all of its ends are roughly Euclidean or Harnack. We briefly discuss these very particular cases hoping that the ideas involved in these proofs will be a good place to start for a proof in the general case.

First of all we recall the definition model manifolds:
\begin{deph}
A complete Riemannian manifold $R$ is a model manifold (or a spherically simmetric manifold) if it is diffeomorphic to $\R^n$ and if there exists a point $o\in R$ such that in exponential polar coordinates the metric assumes the form:
\begin{gather*}
 g_{ij}=\begin{Bmatrix}
         1 &0\\ 0& \sigma(r)\delta_{ij}
        \end{Bmatrix}
\end{gather*}
where $\sigma$ is a smooth positive function on $(0,\infty)$ with $\sigma(0)=0$ and $\sigma'(0)=1$.
\end{deph}
For some references on polar coordinates and model manifolds, we cite \cite{20} (a very complete survey on $2$-parabolicity) and the book \cite{4}.

Define the function $A(r)=\sqrt{g(r)}=\sigma(r)^{\frac{n-1}2}$, where $g(r)$ is the determinant of the metric tensor. Note that, except for a constant depending only on $n$, $A(r)$ is the area of the sphere of radius $r$. On model manifolds, the radial function
\begin{gather*}
 f_{p,\bar r}(r)\equiv\int_{\bar r} ^r A(t)^{-\frac 1 {(p-1)}} dt
\end{gather*}
is a $p$-harmonic function away from the origin $o$, in fact:
\begin{gather*}
 \Delta_p (f)=\frac{1}{\sqrt g} \operatorname{div}(\abs{\nabla f}^{p-2}\nabla f)= \frac{1}{\sqrt g} \partial_i \ton{\sqrt g \ton{g^{kl}\partial_k f \partial_l f}^{\frac{p-2} 2} g^{ij}\partial_j f}=\\
=\frac{1}{ {A(r)} }\partial_r \ton{A(r)\ A(r)^{-\frac{p-2}{p-1}}\ A(r)^{-\frac{1}{p-1}}\ \hat r}=0
\end{gather*}
where $\hat r$ is the gradient of $r$, which in polar coordinates has components $(1,0\cdots,0)$. The function $\min\{f_{p,\bar r},0\}$ is a $p$-subharmonic function on $R$, so if $f_{p,\bar r}(\infty)<~\infty$, $R$ cannot be $p$-parabolic. A straightforward application of the \Ka condition shows that also the reverse implication holds, so that a model manifold $R$ is $p$-parabolic if and only if $f_{p,\bar r}(\infty)=\infty$. This shows that if $R$ is $p$-parabolic, then for any $\bar r>0$ there exists a radial $p$-Evans potential $f_{p,\bar r}\equiv\E_{\bar r}:R\setminus B_{\bar r}(0)\to \R^+$, moreover it is easily seen by direct calculation that:
\begin{gather*}
 \int_{B_R} \abs{\nabla \E_{\bar r}}^{p}dV=\E_{\bar r}(R) \ \ \ \ \Longleftrightarrow \ \ \ \  \int_{\E_{\bar r}\leq t} \abs{\nabla \E_{\bar r}}^p dV =t
\end{gather*}
This estimate is similar to the one in equation \ref{eq_evp}, and it allow us to conclude that:
\begin{gather*}
 \cp(B_{\bar r}, \{\E_{\bar r}\leq t\})=\int_{\{\E_{\bar r}\leq t\}\setminus B_{\bar r}} \abs{\nabla\ton{\frac{\E_{\bar r}}{t}}}^p dV =t^{1-p}
\end{gather*}
Since $R$ is $p$-parabolic, it is clear that $ \cp(B_{\bar r}, \{\E_{\bar r}\leq t\})$ must go to $0$ as $t$ goes to infinity, but this estimate tells us also how fast the convergence is.

What we want to show now is that $p$-parabolic model manifolds admit $p$-Evans potentials relative to any compact set $K$.
\begin{prop}
 Let $R$ be a $p$-parabolic model manifold and $K\subset R$ a $p$-regular compact set. Then there exists an Evans potential $e:R\setminus K \to \R^+$ with
\begin{gather}\label{eq_evp2}
 \cp(K,\{e<t\})\sim t^{1-p}
\end{gather}
as $t$ goes to infinity.
\end{prop}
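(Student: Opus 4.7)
The plan is to construct the $p$-Evans potential as a monotone limit of $p$-harmonic Dirichlet solutions on an exhaustion, using the radial potential as a template and a carefully glued barrier for the upper bound, and then to read off the capacity asymptotic from a flux computation.

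First I would fix $\bar r > 0$ so that $K\subset B_{\bar r}$, and let $\E\equiv\E_{\bar r}$ be the radial $p$-Evans potential on $R\setminus B_{\bar r}$ already constructed. Pick $R_n\to\infty$ and set $t_n:=\E(R_n)$. On each $B_{R_n}\setminus K$ let $e_n$ be the unique $p$-harmonic function with $e_n|_{\partial K}=0$ and $e_n|_{\partial B_{R_n}}=t_n$; its existence and boundary continuity follow from the $p$-regularity of $K$ and smoothness of $\partial B_{R_n}$, with the explicit expression $e_n=t_n(1-h_n)$ where $h_n$ is the $p$-capacity potential of $(K,B_{R_n})$. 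Comparing $e_n$ with $\E$ on the annulus $B_{R_n}\setminus\overline B_{\bar r}$ (both $p$-harmonic, with $\E=0\leq e_n$ on $\partial B_{\bar r}$ and $\E=t_n=e_n$ on $\partial B_{R_n}$) gives $e_n\geq \E$ there; applied to $e_{n+1}$ on $B_{R_{n+1}}\setminus\overline B_{\bar r}$ and restricted to $\partial B_{R_n}$, the same comparison yields $e_{n+1}\geq t_n=e_n$ on $\partial B_{R_n}$, so the comparison principle on $B_{R_n}\setminus K$ forces the monotonicity $e_n\leq e_{n+1}$.

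To obtain a uniform upper bound I would glue a $p$-superharmonic barrier $\tilde{\ka}$. Let $w$ be the $p$-harmonic function on $B_{\bar r}\setminus K$ with $w|_{\partial K}=0$, $w|_{\partial B_{\bar r}}=1$; by the Hopf boundary point lemma there is $\beta>0$ with $\abs{\nabla w}^{p-2}\partial_r w \geq \beta$ uniformly on $\partial B_{\bar r}$. Choose $M$ so large that $M^{p-1}\beta\geq (\E'(\bar r))^{p-1}$ and set $\tilde{\ka}:=Mw$ on $B_{\bar r}\setminus K$ and $\tilde{\ka}:=M+\E$ on $R\setminus B_{\bar r}$. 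The two pieces agree at value $M$ on $\partial B_{\bar r}$, both are $p$-harmonic on their respective sides, and the inequality on $M$ makes the distributional jump of $\abs{\nabla\tilde{\ka}}^{p-2}\nabla\tilde{\ka}\cdot\hat{r}$ across $\partial B_{\bar r}$ nonpositive, so $\tilde{\ka}$ is $p$-superharmonic on $R\setminus K$ and vanishes on $\partial K$. Since $\tilde{\ka}|_{\partial B_{R_n}}=M+t_n\geq t_n=e_n|_{\partial B_{R_n}}$ and $\tilde{\ka}|_{\partial K}=0=e_n|_{\partial K}$, comparison gives $e_n\leq\tilde{\ka}$ on $B_{R_n}\setminus K$. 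Thus $e:=\lim_n e_n$ is locally finite; by the standard regularity for monotone limits of $p$-harmonic functions it is $p$-harmonic on $R\setminus K$, and the bounds $\E\leq e\leq\tilde{\ka}$ give $e|_{\partial K}=0$ and $e(x)\to\infty$ at infinity, so $e$ is a $p$-Evans potential relative to $K$.

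For the asymptotic \eqref{eq_evp2}, the key observation is that $1-e/t$ is exactly the $p$-capacity potential of the couple $(K,\{e<t\})$, since it is $p$-harmonic on $\{0<e<t\}$, equal to $1$ on $\partial K$ and $0$ on $\{e=t\}$. Hence
\[
\cp(K,\{e<t\})=\int_{\{e<t\}}\abs{\nabla(1-e/t)}^p\,dV=t^{-p}\int_{\{e<t\}}\abs{\nabla e}^p\,dV.
\]
Integrating $\operatorname{div}(\abs{\nabla e}^{p-2}\nabla e)=0$ over the region between two level sets shows that the $p$-flux $F:=\int_{\{e=s\}}\abs{\nabla e}^{p-1}\,d\sigma$ is a positive finite constant (independent of $s$), and the coarea formula yields $\int_{\{e<t\}}\abs{\nabla e}^p\,dV=Ft$, so $\cp(K,\{e<t\})=Ft^{1-p}$, which is \eqref{eq_evp2}. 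The main obstacle I expect is the barrier construction: verifying that $\tilde{\ka}$ is $p$-superharmonic across $\partial B_{\bar r}$ requires the quantitative Hopf estimate on $\abs{\nabla w}$ and the calibrated choice of $M$, while the remaining steps reduce to the comparison principle, the Dirichlet/obstacle theory of Section~\ref{sec_obs}, and standard divergence/coarea identities.
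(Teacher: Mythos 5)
Your argument is correct in its overall architecture and coincides with the paper's at the start: both construct $e$ as the increasing limit of the $p$-harmonic Dirichlet solutions $e_n$ on an exhaustion by sublevel sets of the radial potential, with monotonicity coming from the comparison $e_n\geq \E_{\bar r}$. You depart genuinely at the two critical steps. (i) \emph{Local boundedness.} The paper never builds a barrier: it sets $m_n=\min_{\partial B_{\bar r}}e_n$, notes $\{e_n\leq m_n\}\subset B_{\bar r}$, and combines the level-set capacity identity of Lemma \ref{lemma_cap} with $\cp(K,\{e_n<n\})\leq \cp(B_{\bar r},\{\E_{\bar r}<n\})=n^{1-p}$ to get the uniform bound $m_n^{p-1}\leq \cp(K,B_{\bar r})^{-1}$, after which the Harnack convergence theorem excludes divergence; the bound $e_n\leq M h$ (your $w$) is then used only to get $e|_{\partial K}=0$. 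Your glued supersolution $\tilde\ka$ achieves the same end but leans on the quantitatively strongest tools in your write-up: a Hopf boundary-point lemma for the degenerate operator $\Delta_p$, $C^{1}$ regularity of $w$ up to $\partial B_{\bar r}$ so that $\beta=\inf_{\partial B_{\bar r}}\abs{\nabla w}^{p-1}>0$ makes sense and is attained, and a pasting lemma with a one-sided flux condition across the interface. These are all true (V\'azquez, Lieberman, Tolksdorf), but they need to be cited or proved; the paper's capacity-plus-Harnack route is strictly more elementary and stays within the tools already set up in Sections 0--1. (ii) \emph{The asymptotics.} Here your route is arguably better: identifying $1-e/t$ as the capacity potential of $(K,\{e<t\})$ and invoking constancy of the $p$-flux gives the exact identity $\cp(K,\{e<t\})=Ft^{1-p}$, sharper than the paper's two-sided comparison (upper bound $(t-M)^{1-p}$ via $e\leq M+\E_{\bar r}$, lower bound via splitting the Dirichlet integral over $\{e<M\}$ and $\{M<e<t\}$). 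The one point to be careful about is that the flux $F$ should be defined weakly, e.g.\ as $\frac{1}{s_2-s_1}\int_{\{s_1<e<s_2\}}\abs{\nabla e}^{p}\,dV$ using truncations of $e$ as test functions, rather than as a surface integral over level sets that may contain critical points; with that reading the identity $\int_{\{e<t\}\setminus K}\abs{\nabla e}^p\,dV=Ft$ is rigorous and $F=\cp(K,\{e<1\})>0$ by the $p$-regularity of $K$.
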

\begin{proof}
 Since $K$ is bounded, there exists $\bar r>0$ such that $K\subset B_{\bar r}$. Let $\E_{\bar r}$ be the radial $p$-Evans potential relative to this ball. For any $n>0$, set $A_n=\{\E_{\bar r}\leq n\}$ and define the function $e_n$ to be the unique $p$-harmonic function on $A_n\setminus K$ with boundary values $n$ on $\partial A_n$ and $0$ on $\partial K$. An easy application of the comparison principle shows that $e_n\geq \E_{\bar r}$ on $A_n\setminus K$, and so the sequence $\{e_n\}$ is increasing. By the Harnack principle, either $e_n$ converges locally uniformly to a harmonic function $e$, or it diverges everywhere to infinity. To exclude the latter, set $m_n$ to be the minumum of $e_n$ on $\partial B_{\bar r}$. By the maximum principle the set $\{0\leq e_n \leq m_n\}$ is contained in the ball $B_{\bar r}$, and using the capacity estimates in \ref{lemma_cap}, we get that:
\begin{gather*}
 \cp(K,B_{\bar r})\leq \cp(K, \{e_n< m_n\})=\cp(K,\{e_n/n < m_n/n\}) =\\
=\frac{n^{p-1}}{m_n^{p-1}}\cp{K, e_n<n} \leq \frac{n^{p-1}}{m_n^{p-1}}\cp(B_{\bar r},\E_{\bar r}<n)\\
\\
m_n^{p-1} \leq \frac{n^{p-1}\cp(B_{\bar r}, \{\E_{\bar r}< n\})}{\cp(K,B_{\bar r})}<\infty
\end{gather*}
So the limit function $e=\lim_n e_n$ is a $p$-harmonic function in $R\setminus K$ with $e\geq \E_{\bar r}$. Boundary continuity estimates like the one in \cite{78} (p236) prove that $e|_{\partial K}=0$, but in this case we can use a more simple argument. Let in fact $M$ be the maximum of $e$ on $\partial B_{\bar r}$. Then by the comparison principle, $0\leq e_n\leq M h$ for every $n$, where $h$ is the $p$-harmonic potential of $(K,B_{\bar r})$. The $p$-regularity of $K$ ensures that $h$ is continuous up to the boudary with $h|_{\partial K}=0$, and so the claim is proved.

To prove the estimates on the capacity, consider that by the comparison principle for every $n$ (and so also for the limit) $e_n \leq M+\E_{\bar r}$ (where this relation makes sense), so that:
\begin{gather*}
 \cp(K,\{e<t\})\leq \cp(B_{\bar r},\{e<t\})\leq \\
\leq\cp(B_{\bar r}, \{\E_{\bar r}<t-M\}) =(t-M)^{1-p}\sim t^{1-p}
\end{gather*}
For the reverse inequality, we have:
\begin{gather*}
 \cp(K, \{e<t\})=\int_{\{e<t\}\setminus K}\abs {\nabla \ton{\frac e t}}^p dV =\int_{\{e<M\}\setminus K} \abs {\nabla \ton{\frac e t}}^p dV + \\
+\int_{\{M<e<t\}} \abs {\nabla \ton{\frac e t}}^p dV =\ton{\frac m t }^p\int_{\{e<M\}\setminus K} \abs {\nabla \ton{\frac e M}}^p dV +\\
+\ton{\frac {t-m}{t}} ^p\int_{\{M<e<t\}} \abs {\nabla \ton{\frac e {t-M}}}^p dV \geq\\
\geq\ton{\frac m t }^p \cp(K, \{e<M\})+ \ton{\frac {t-m}{t}} ^p \cp{B_{\bar r}, \E_{\bar r}}\sim t^{1-p}
\end{gather*}

\end{proof}

We conclude this work with a very simple consideration. Lemma 2.14 in \cite{8} proves that on the complement of a $p$-regular compact set in a $p$-parabolic Riemannian manifold, there always exists a positive unbounded harmonic function $\E'$. On some particular manifolds, every nonnegative $p$-harmonic function has a limit at infinity, and in particular any unbounded function goes to infinity on the ideal boundary of $R$. These shows that the function $\E'$ is actually an Evans potential.\\
For example, if $R$ is roughly Euclidean or if all of its ends are Harnack ends, then all nonnegative $p$-harmonic functions have a limit at infinity. As references for these particular properties we cite \cite{8} and \cite{9}, in particular lemma 3.23 in \cite{9} and paragraph 3 in \cite{8}.

Even though we weren't able to prove that every $p$-parabolic manifold admit a $p$-Evans potential, the partial proofs of this last section suggest that this is true, or at least that investigating this problem could be interesting.

Note that Evans potentials and in particular estimates like \ref{eq_evp} and \ref{eq_evp2} are useful to study the behaviour of functions on the manifold $R$, as proved in \cite{1}.

\end{document}